\newtheorem{theorem}{Theorem}
\newtheorem{lemma}[equation]{Lemma}
\newtheorem{proposition}[equation]{Proposition}
\newtheorem{claim}[equation]{Claim}
\theoremstyle{definition}
\newcommand{\theoremname}{testing}
\theoremstyle{remark}
\newtheorem*{remark*}{Remark}
\numberwithin{equation}{section}
\renewcommand{\phi}{\varphi}
\newcommand{\eps}{\varepsilon}
\newcommand{\cA}{\mathcal A}
\newcommand{\bE}{\mathbb E}
\newcommand*{\dd}[1]{{\rm d}#1}
\newcommand*{\what}[1]{\widehat{#1}}
\def\re#1{\mathrm{Re}\,#1}
\renewcommand{\le}{\leqslant}
\renewcommand{\ge}{\geqslant}
\newcommand{\bR}{\mathbb R}
\newcommand{\bC}{\mathbb C}
\newcommand{\bZ}{\mathbb Z}
\newcommand{\bD}{\mathbb D}
\newcommand{\bP}{\mathbb P}
\newcommand{\ti}{\widetilde}
\title[Roots of random Littlewood polynomials]{Approximately half of the roots of a random Littlewood polynomial are inside the disk}
\author{Oren Yakir}
\address{School of Mathematical Sciences, Tel Aviv University, Tel Aviv 69978, Israel}
\email{oren.yakir@gmail.com}
\begin{document}
	\maketitle
	\begin{abstract}
		We prove that for large $n$, all but $o(2^{n})$ polynomials of the form $P(z) = \sum_{k=0}^{n-1}\pm z^k$ have $n/2 + o(n)$ roots inside the unit disk. This solves a problem from Hayman's book \cite[Problem~4.15]{hayman}.
	\end{abstract}

	\section{Introduction}
	\let\thefootnote\relax\footnote{Research supported by Israel Science Foundation Grants 382/15, 1903/18 and by European Research Council Advanced Grant 692616.}
	A \emph{Littlewood polynomial} is a polynomial with coefficients in $\{-1,1\}$. Let $n\ge 2$ be large and consider the random polynomial given by
	\begin{equation}
		\label{eq:def_of_polynomial}
		P(z):= \sum_{k=0}^{n-1} X_k z^k 
	\end{equation}
	where the $X_k$'s are independent random variables with 
	$$
	\bP\left(X_k = 1\right) = \bP\left(X_k = - 1\right) = \frac{1}{2}.
	$$
	So $P$ is a polynomial chosen uniformly at random among all $2^{n}$ Littlewood polynomials of degree $n-1$. It is well known that as the degree $n$ tends to infinity, the roots of $P$ tend to cluster uniformly around the unit circle, see \cite{sparo_sur} for the classical proof of this fact or \cite{ibragimov_zeitouni} for finer results and additional references. Denote 
	by $$\nu_n := \sum_{z: P(z) = 0} \delta_z$$ the (random) counting measure of roots of $P$ and by $\bD$ the unit disk in the plane. The following problem was posed in Hayman's classical problem book: \emph{``If $\eps_k = \pm 1$, is it true that, for large $n$, all but $o(2^n)$ polynomials of the form $P(z)= \sum_{k=1}^n \eps_k z^k$ have just $n/2 + o(n)$ roots in $\bD$?"}. In the recent fiftieth anniversary reprint \cite[Problem~4.15]{hayman}, it is stated that no progress on this problem has been reported. The same question was also asked in a paper by Borwien, Choi, Ferguson and Jankauskas \cite{borwein_choi_ferguson_jankauskas} which speculated the answer should be positive. Our main result is an affirmative answer to this question.
	\begin{theorem}
		\label{thm:main_theorem}
		We have
		\[
		\lim_{n\to\infty} \bP\left(\left|\nu_n(\bD) - \frac{n}{2} \right| \ge n^{9/10} \right) = 0.
		\]
		In particular, $\nu_n(\bD)/n \xrightarrow{p} 1/2$ as $n\to\infty$, where $\xrightarrow{p}$ denotes convergence in probability.
	\end{theorem}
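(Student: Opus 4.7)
The plan is to combine the reciprocal symmetry $P \leftrightarrow P^*$, where $P^*(z) := z^{n-1} P(1/z)$, with Jensen's formula applied at two radii. The polynomial $P^*$ has coefficients $(X_{n-1}, X_{n-2}, \ldots, X_0)$, so $P^* \overset{d}{=} P$, and its roots are exactly the reciprocals $\{1/\alpha_k\}$ of $P$'s roots---so this symmetry interchanges the inside and outside of the unit disk.

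For a radius $r > 1$, set $F(r) := \frac{1}{2\pi}\int_0^{2\pi}\log|P(re^{i\theta})|\,d\theta$. Since $|P(0)| = 1$, Jensen's formula applied at radii $r$ and $1/r$ combines to give the identity
\[
F(r) - F(1/r) = 2(\log r)\,\nu_n(\bD) + \Xi(r),
\]
where $\Xi(r) := \sum_{1 \le |\alpha_k| < r}\log(r/|\alpha_k|) - \sum_{1/r \le |\alpha_k| < 1}\log(r|\alpha_k|)$ depends only on roots in the annulus $\{1/r < |z| < r\}$. Separately, the deterministic relation $|P^*(re^{i\theta})| = r^{n-1}|P(e^{-i\theta}/r)|$ implies $F^{P^*}(r) = (n-1)\log r + F^P(1/r)$; combined with $P^* \overset{d}{=} P$, this yields $\bE F(r) - \bE F(1/r) = (n-1)\log r$. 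Therefore $\bE \nu_n(\bD) = (n-1)/2$, up to a correction coming from the expected number of roots of $P$ on the unit circle, which can be shown by standard means to be $O(\log n)$ and hence negligible.

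It thus suffices to concentrate $\nu_n(\bD)$ about its mean. Rewriting the identity,
\[
\nu_n(\bD) - \frac{n-1}{2} = \frac{[F(r) - \bE F(r)] - [F(1/r) - \bE F(1/r)] - (\Xi(r) - \bE \Xi(r))}{2\log r}.
\]
I would fix $r$ to be a large constant (say $r = 3$) so that $|P|$ is uniformly bounded below on both $|z|=r$ and $|z|=1/r$: indeed $|P(3e^{i\theta})| \ge 3^{n-1} - \sum_{k<n-1}3^k \ge \tfrac{1}{2}$, and similarly $|P(e^{i\theta}/3)| \ge 1 - \sum_{k \ge 1}3^{-k} = 1/2$. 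Then $F(r)$ and $F(1/r)$ are bounded-differences functionals of the Rademacher variables $(X_k)$, and McDiarmid's inequality yields concentration of each around its mean on the scale $O(\sqrt{n})$.

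The core difficulty is bounding the fluctuation of the linear statistic $\Xi(r) = \sum_k T(\alpha_k)$, where $T$ is a piecewise continuous function bounded by $\log r$, supported in the annulus $\{1/r < |z| < r\}$, and with a jump across the unit circle. Although $\bE \Xi(r)$ vanishes (modulo boundary roots) by the $P \leftrightarrow P^*$ symmetry, bounding its variance is delicate because the roots of a typical Littlewood polynomial cluster tightly near the unit circle, precisely where $T$ is discontinuous. I would estimate $\operatorname{Var}(\Xi(r))$ via two-point correlations of the roots, using an Edelman--Kostlan-type formula adapted to Rademacher coefficients, and aim for the bound $o(n^{9/5})$; combined with Chebyshev's inequality, this gives the theorem with the stated rate $n^{9/10}$. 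The main obstacle is exactly this variance estimate, which is effectively a quantitative refinement of the inside/outside symmetry of the roots and therefore of the very problem we are solving.
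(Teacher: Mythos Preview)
Your decomposition is circular. For $r=3$ every root of a Littlewood polynomial lies in the annulus $\{1/r<|z|<r\}$ (since $|\alpha_k|\in[1/2,2]$), so using $\int_{-\pi}^{\pi}\log|re^{i\theta}-\alpha|\,\dd\mu=\log\max(r,|\alpha|)$ one computes directly
\[
F(r)=\sum_k\log r=(n-1)\log r,\qquad F(1/r)=\sum_k\log|\alpha_k|=\log\Bigl|\tfrac{X_0}{X_{n-1}}\Bigr|=0.
\]
Hence $F(r)-F(1/r)=(n-1)\log r$ \emph{deterministically}. Plugging this into your own identity forces
\[
\Xi(r)=(n-1)\log r-2(\log r)\,\nu_n(\bD),
\]
so $\operatorname{Var}(\Xi(r))=4(\log r)^2\operatorname{Var}(\nu_n(\bD))$: the ``core difficulty'' of bounding $\operatorname{Var}(\Xi(r))$ is literally the theorem you are trying to prove. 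Equivalently, in your displayed formula for $\nu_n(\bD)-\tfrac{n-1}{2}$ the two $F$-fluctuations cancel identically, and what remains is a tautology. The McDiarmid step is correct but carries no information.

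The paper avoids this by working at radii $r=1\pm\tau$ with $\tau=n^{-11/10}$, i.e.\ extremely close to the unit circle. In that regime the analogue of your $\Xi$-term is genuinely negligible (each root in the thin annulus contributes at most $\log r\approx\tau$), and all the content shifts to concentrating the logarithmic integrals $\int\log|\ti P(re^{i\theta})|\,\dd\mu$ at $r=1$ and $r=1\pm\tau$. There one cannot use bounded differences, because $|P|$ is not bounded below near the circle; instead the paper proves a second-moment estimate (Lemma~\ref{lemma:mean_of_log_mahler_measure}) via a Berry--Esseen CLT for $(\ti P(re^{i\theta}),\ti P(re^{i\phi}))$ together with an averaged small-ball bound (Proposition~\ref{proposition:bound_on_average_small_ball_probability}) to control the singularity of $\log$ near zero. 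Your symmetry/expectation argument is fine for the mean, but to get concentration you need to work near $|z|=1$ and confront the log-integrability issue there.
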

	The exponent $9/10$ in Theorem \ref{thm:main_theorem} is not optimal and probably the correct order of the deviations is $\sqrt{n}$ , but the methods of this paper are insufficient to establish this. It may be that similar methods as in \cite{do_nguyen} can give the first order term in $\text{Var}(\nu_{n}(\bD))$, but we did not pursue this direction.
	
	The key step towards proving Theorem \ref{thm:main_theorem} is a concentration result for the logarithmic integral of $P$ and to describe it we fix some notation. Throughout we denote by $\dd\mu(x) := \dd x/(2\pi)$ the normalized measure on the interval $[-\pi,\pi]$. For $\theta\in[-\pi,\pi]$ fixed and for all $r>0$ we set
	\begin{equation}
	\label{eq:def_radial_variance}
	\sigma(r)^2 := \bE\left|P(re^{i\theta})\right|^2 = \sum_{k=0}^{n-1}r^{2k} = \frac{r^{2n} - 1}{r^2 - 1}, \qquad \left(\sigma(1)^2 = n\right)
	\end{equation}
	and $\ti{P}(re^{i\theta}):= P(re^{i\theta})/\sigma(r)$. Here and everywhere $\bE$ stands for the expectation with respect to the random coefficients of $P$. The following lemma is the key step towards proving Theorem \ref{thm:main_theorem}.
	\begin{lemma}
		\label{lemma:mean_of_log_mahler_measure}
		Let $\ell \in \{1,2\}$. For all $r \in [1-n^{-11/10},1+n^{-11/10}]$ we have that, as $n\to\infty$,
		\[
		\bE \left[\left( \int_{-\pi}^{\pi} \log|\ti{P}(re^{i\theta})| \dd \mu(\theta)\right)^{\ell} \right] = \left(-\frac{\gamma}{2}\right)^{\ell} + \mathcal{O}\left(\frac{(\log n)^2}{\sqrt{n}}\right),
		\]
		where $\gamma \approx 0.5772...$ is Euler's constant. 
	\end{lemma}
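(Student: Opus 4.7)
For both $\ell=1,2$, the plan is to compare $\ti P(re^{i\theta})$ to a standard complex Gaussian model, using quantitative CLT plus anti-concentration to handle the logarithmic singularity. The target constant comes from the identity $\bE\log|Z|=-\gamma/2$ when $|Z|^2$ is unit exponential (equivalently $Z$ is standard complex Gaussian). For $\ell=1$, Fubini reduces the task to a pointwise estimate
\[
\bE\log|\ti P(re^{i\theta})|=-\tfrac{\gamma}{2}+O\!\left(\tfrac{\log n}{\sqrt n}\right)
\]
uniform on a co-null subset of $[-\pi,\pi]$. This pointwise estimate rests on two inputs: (a) a Berry--Esseen/Edgeworth approximation of the law of $(\re\ti P,\im\ti P)$ by the bivariate Gaussian with matching covariance, within $O(n^{-1/2})$; and (b) a Halász/Esseen small-ball bound $\bP(|\ti P(re^{i\theta})|\le t)\lesssim t^2$ valid down to $t\sim n^{-C}$, together with the deterministic bound $|\ti P|\le \sqrt n$. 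Together, via a layer-cake representation of $\bE\log|\ti P|$, these control the singularity of $\log$ at $0$ up to the stated logarithmic factor. For $\theta$ in small neighborhoods of $\{0,\pm\pi\}$, where the Gaussian limit is real-valued and the constant differs, the contribution is absorbed into the error using $|\bE\log|\ti P||\lesssim \log n$ and the vanishing $\mu$-measure of those sets.

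For $\ell=2$, expanding the square and using Fubini gives
\[
\iint\bE\!\left[\log|\ti P(re^{i\theta_1})|\log|\ti P(re^{i\theta_2})|\right]d\mu(\theta_1)d\mu(\theta_2).
\]
The key input is a joint CLT: the pair $(\ti P(re^{i\theta_1}),\ti P(re^{i\theta_2}))$ is asymptotically a centered Gaussian vector in $\bC^2$ whose cross covariance equals a normalized Dirichlet kernel bounded by $\min(1,(n|\theta_1-\theta_2|)^{-1})$. I would split the double integral at the scale $|\theta_1-\theta_2|=n^{-1/2}$. On the near-diagonal region (of $\mu\otimes\mu$ mass $O(n^{-1/2})$) the integrand is bounded by $(\log n)^2$ via Cauchy--Schwarz and the $L^2$ estimate $\bE[(\log|\ti P|)^2]\lesssim (\log n)^2$, proven analogously to (a) and (b). On the off-diagonal region the cross covariance is $\lesssim n^{-1/2}$, so the limiting Gaussian expectation factorizes and the $\ell=1$ pointwise estimate yields $(-\gamma/2)^2+O((\log n)/\sqrt n)$ per pair. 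Summing the two contributions gives the claimed $(\gamma/2)^2+O((\log n)^2/\sqrt n)$.

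The main obstacle is the log-moment step itself: the Edgeworth expansion and small-ball bound must be combined cleanly and uniformly in $\theta$, in particular across the degenerate angles $\{0,\pm\pi\}$ where the complex Gaussian model collapses to a real one. The pairwise version of this step --- a joint Edgeworth/small-ball bound needed for the second moment --- is structurally similar but technically heavier. The mild relaxation $r\in[1\pm n^{-11/10}]$ is essentially free of cost: since $k\le n$, one has $r^{2k}=1+O(n^{-1/10})$, so both $\sigma(r)^2=n(1+o(1))$ and the Dirichlet kernel bounds are unchanged, and the analysis is uniform in this range of $r$.
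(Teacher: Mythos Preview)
Your high-level strategy --- Fubini, a Berry--Esseen CLT for the main term, a small-ball bound to handle the logarithmic singularity, and a near-diagonal/off-diagonal split for $\ell=2$ --- matches the paper's. The gap is in how you propose to control the singularity.

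You claim a \emph{pointwise} Hal\'asz/Esseen bound $\bP(|\ti P(re^{i\theta})|\le t)\lesssim t^2$ and a pointwise second-moment bound $\bE[(\log|\ti P|)^2]\lesssim (\log n)^2$. Neither holds uniformly in $\theta$. At $r=1$ and any root of unity $\theta=2\pi p/q$ (not only $\{0,\pm\pi\}$), the vector $(e^{ik\theta})_k$ is lattice-valued and $P(e^{i\theta})$ vanishes with positive probability; hence $\bE\log|\ti P(re^{i\theta})|=-\infty$ and your claimed bound $|\bE\log|\ti P||\lesssim \log n$ on the exceptional set is false. Even away from roots of unity, the constant in any pointwise small-ball bound depends on the Diophantine properties of $\theta$, and you give no mechanism to make it uniform on a set of co-measure $O(n^{-1/2})$.

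The paper sidesteps this by never proving a pointwise small-ball bound. Instead it proves an \emph{averaged} bound (Proposition~\ref{proposition:bound_on_average_small_ball_probability})
\[
\int_{-\pi}^{\pi}\bP\bigl(|P(re^{i\theta})|\le a\bigr)\,\dd\mu(\theta)\lesssim n^{-5}+n^{240}a\log(a^{-1}),
\]
and pairs it with a \emph{deterministic} estimate $\int_{-\pi}^{\pi}|\log|p(\theta)||^4\,\dd\mu(\theta)\lesssim n^4$, obtained from the fact that all roots of $P$ lie in the annulus $\{\tfrac12\le |z|\le 2\}$. After truncating at level $|P|\ge n^{-A}$, Cauchy--Schwarz in probability and then in $\theta$ shows the contribution of the truncated region is $O(n^{-1/2})$. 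On the complement of the truncation event the integrand is bounded by $(\log n)^2$ deterministically, which is what makes the near-diagonal estimate go through. Replacing your pointwise log-moment claims by this truncation-plus-averaging device would close the gap.
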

	
	Recall that for a monic polynomial $F(z) = \prod_{j=1}^{n} (z-\alpha_j)$ the \emph{Mahler measure} of $F$ is given by
	\begin{equation}
	\label{eq:def_mahler_measure}
	M(F) := \prod_{j=1}^{n} \max\{1,|\alpha_j|\}.
	\end{equation}
	Using Jensen's formula and \eqref{eq:def_mahler_measure} it is evident that
	\begin{equation}
		\label{eq:def_mahler_measure_logaritmic_integral}
		\log M(F) = \int_{-\pi}^{\pi} \log|F(e^{i\theta})|\, \dd \mu(\theta).
	\end{equation}
	With the above definition in mind, Lemma \ref{lemma:mean_of_log_mahler_measure} (the case $r=1$) tells us that the Mahler measure of a random Littlewood polynomials $P$ is typically close to its mean. The main technical difficulty in proving Lemma \ref{lemma:mean_of_log_mahler_measure} is the singularity of the logarithm near zero. For instance, $P(1)$ can vanish with positive probability if $n$ is even, and hence $|\bE\log|P(1)|| = \infty$. 
	
	In \cite[Theorem~1.2]{choi_erdelyi} Choi and Erd\'{e}lyi prove that
	\begin{equation}
		\label{eq:limit_of_mahler_measure_truncation}
		\lim_{n\to\infty} \bE \log \left(\frac{M(\what{P})}{\sqrt{n}}\right) = -\frac{\gamma}{2}
	\end{equation} 
	where $\what{P}:= \max\{|P|,n^{-1}\}$ and $M(\what{P})$ is defined via the logarithmic integral in \eqref{eq:def_mahler_measure_logaritmic_integral}. See also the introduction in \cite{choi_erdelyi} for a basic survey on the Mahler measure of polynomials with integer coefficients. Our Lemma \ref{lemma:mean_of_log_mahler_measure} (the case $r=1$ and $\ell=1$) shows that the limit \eqref{eq:limit_of_mahler_measure_truncation} remains true when we replace the truncated polynomial $\what{P}$ by $P$. In fact, we can say even more, as Lemma \ref{lemma:mean_of_log_mahler_measure} (the case $r=1$) together with Chebyshev's inequality implies that
	\begin{equation*}
		\frac{M(P)}{\sqrt{n}} \xrightarrow{p} e^{-\gamma/2} \quad \text{as } n\to\infty.
	\end{equation*}
	In view of the above, we hope that Lemma \ref{lemma:mean_of_log_mahler_measure} is of independent interest to some of the readers and will shed some light on the Mahler measure of random polynomials.
	
	The paper is organized as follows. In Section \ref{sec:proof_of_main_thm} we show how the proof of Theorem \ref{thm:main_theorem} follows from Lemma \ref{lemma:mean_of_log_mahler_measure}. Then, in Section \ref{sec:concentration_of_log_integral}, we prove Lemma \ref{lemma:mean_of_log_mahler_measure} with aid of some technicalities that we later prove in Sections \ref{sec:proof_of_prop} and \ref{sec:proof_of_clt}. 
	\subsection*{Notation}
	Throughout we will denote by $\dd\mu$ the normalized measure on $[-\pi,\pi]$ and by $\dd m(z)$ the Lebesgue measure in the plane $\bC = \bR^2$. We write that $f_n\lesssim g_n$ or $f_n = \mathcal{O}(g_n)$ if there exist a constant $C>0$ such that $f_n \leq C \cdot g_n$ for $n$ large enough. We write $f_n = o(g_n)$ if $f_n/g_n \xrightarrow{n\to\infty} 0$, and write $f_n\sim g_n$ if $f_n/g_n \xrightarrow{n\to\infty} 1$. Finally, we denote by $\text{Id}$ the identity matrix (the dimension should be clear from the context).
	
	\subsection*{Acknowledgments}
	I thank my advisors Alon Nishry and Mikhail Sodin for many helpful discussions and for suggestions regarding the presentation of this result. I also wish to thank Aron Wennman and Ofer Zeitouni for fruitful conversations.
	
	\section{Proof of main result}
	\label{sec:proof_of_main_thm}
	We will prove Theorem \ref{thm:main_theorem} by showing the corresponding upper and lower bounds. The main result will follow once we show that
	\begin{equation}
		\label{eq:upper_bound_for_roots}
		\lim_{n\to \infty} \bP\left(\nu_{n}(\bD) \ge \frac{n}{2} + n^{9/10} \right) = 0
	\end{equation}
	and that,
	\begin{equation}
	\label{eq:lower_bound_for_roots}
	\lim_{n\to \infty} \bP\left(\nu_{n}(\bD) \le \frac{n}{2} - n^{9/10} \right) = 0.
	\end{equation}
	As noted in the introduction, we will assume here that Lemma \ref{lemma:mean_of_log_mahler_measure} holds and show how Theorem \ref{thm:main_theorem} follows from it. We will prove Lemma \ref{lemma:mean_of_log_mahler_measure} in Section \ref{sec:concentration_of_log_integral}. Recall that Jensen's formula (see for instance \cite[p.~207]{ahlfors}) states that
	\begin{equation}
		\label{eq:jensen_formula}
		\int_{0}^{r} \frac{N_f(t)}{t} \, \dd t = \int_{-\pi}^{\pi} \log|f(re^{i\theta})| \, \dd\mu(\theta) - \log|f(0)|
	\end{equation}
	where $f$ is an analytic function on $\{|z|\leq r\}$ such that $f(0)\not=0$ and \[
	N_f(t):=\#\{|z|\leq t : \, f(z)=0 \}.\]
	\begin{proof}[Proof of Theorem \ref{thm:main_theorem}]
		We will first prove the upper bound \eqref{eq:upper_bound_for_roots}. Let $\tau = n^{-11/10}$. By Jensen's formula \eqref{eq:jensen_formula} we have
		\begin{align}
		\nonumber \nu_{n}(\bD) &\leq \frac{1}{\log(1+\tau)} \int_{1}^{1+\tau} \frac{\nu_{n}(r\bD)}{r} \dd r \\ \nonumber &= \frac{1}{\log(1+\tau)} \left( \int_{-\pi}^{\pi} \log\left|P((1+\tau)e^{i\theta})\right| \dd \mu(\theta) - \int_{-\pi}^{\pi} \log\left|P(e^{i\theta})\right| \dd \mu(\theta) \right) \\ \nonumber  &= \frac{\log \sigma(1+\tau) - \log \sigma(1)}{\log(1+\tau)} \\ \label{eq:upper_bound_for_zeros_in_small_strip} & \qquad + \frac{1}{\log(1+\tau)} \left( \int_{-\pi}^{\pi} \log|\ti{P}((1+\tau)e^{i\theta})| \dd\mu(\theta) - \int_{-\pi}^{\pi} \log|\ti{P}(e^{i\theta})| \dd\mu(\theta) \right).
		\end{align}
		We differentiate relation \eqref{eq:def_radial_variance} and see that
		\begin{align*}
			&\frac{\partial}{\partial r} \log \sigma(r) = \frac{1}{2} \frac{\sum_{k=0}^{n-1} (2k)r^{2k-1}}{\sum_{k=0}^{n-1} r^{2k}} \\ & \frac{\partial^2}{\partial r^2} \log \sigma(r) = \frac{1}{2} \frac{\sum_{k=0}^{n-1}2k(2k-1)r^{2k-2} }{\sum_{k=0}^{n-1}r^{2k} } - \frac{\left(\sum_{k=0}^{n-1}2kr^{2k-1}\right)^2 }{\left(\sum_{k=0}^{n-1}r^{2k}\right)^2 }.
		\end{align*}
		This implies that $\frac{\partial}{\partial r} \log \sigma(r)\Big|_{r=1} = (n-1)/2$ and that
		\begin{align*}
		\left| \frac{\partial^2}{\partial r^2} \log \sigma(r) \right| &\leq \left| \frac{\sum_{k=0}^{n-1}k(2k-1)r^{2k-2} }{\sum_{k=0}^{n-1}r^{2k} }\right| +\left| \frac{\left(\sum_{k=0}^{n-1}2kr^{2k-1}\right)^2 }{\left(\sum_{k=0}^{n-1}r^{2k}\right)^2 } \right| \\ &\leq \frac{\frac{4}{6}n^{3}}{n/2}  + \frac{n^4}{n^2/2} \leq 4n^2
		\end{align*}
		for all $n$ large enough and for all $r\in[1-\tau,1+\tau]$. Plugging the above into a second order Taylor expansion, while using the fact that $\log(1+\tau)\sim \tau$ for $\tau$ small, yields that
		\begin{equation}
		\label{eq:taylor_expantion_for_variance}
		\left|\frac{\log \sigma(1+\tau) - \log \sigma(1)}{\log(1+\tau)} - \frac{n}{2} \right| \leq  2 \tau n^2.
		\end{equation}
		Altogether, we combine \eqref{eq:upper_bound_for_zeros_in_small_strip} and \eqref{eq:taylor_expantion_for_variance} and observe that, for large enough $n$,
		\begin{align*}
		\bP&\left(\nu_{n}(\bD) \ge \frac{n}{2} + n^{9/10} \right) \\ &\leq \bP\left( \int_{-\pi}^{\pi} \log|\ti{P}((1+\tau)e^{i\theta})| \dd\mu(\theta) - \int_{-\pi}^{\pi} \log|\ti{P}(e^{i\theta})| \dd\mu(\theta)  \geq \tau n^{9/10}  + 2\tau^2 n^2 \right) \\ &= \bP \left( \int_{-\pi}^{\pi} \log|\ti{P}((1+\tau)e^{i\theta})| \dd\mu(\theta) - \int_{-\pi}^{\pi} \log|\ti{P}(e^{i\theta})| \dd\mu(\theta)  \geq 3 n^{-1/5} \right) \\ & \leq \bP\left(\left|\int_{-\pi}^{\pi} \log|\ti{P}((1+\tau)e^{i\theta})| \dd\mu(\theta) + \frac{\gamma}{2} \right| \ge n^{-1/5}\right) \\ & \qquad+ \bP\left(\left|\int_{-\pi}^{\pi} \log|\ti{P}(e^{i\theta})| \dd\mu(\theta) + \frac{\gamma}{2} \right| \ge n^{-1/5}\right) \lesssim n^{2/5-1/2} (\log n)^2 \xrightarrow{n\to\infty} 0
		\end{align*}
		where in the last inequality we applied Chebyshev together with Lemma \ref{lemma:mean_of_log_mahler_measure}. We have thus proved the upper bound \eqref{eq:upper_bound_for_roots}. The lower bound \eqref{eq:lower_bound_for_roots} proceeds in a similar way, starting with the inequality
		\begin{align*}
			\nu_{n}(\bD) &\geq \frac{1}{-\log(1-\tau)} \int_{1-\tau}^{1} \frac{\nu_{n}(r\bD)}{r} \dd r \\ &= \frac{1}{-\log(1-\tau)} \left( \int_{-\pi}^{\pi} \log\left|P(e^{i\theta})\right| \dd \mu(\theta) - \int_{-\pi}^{\pi} \log\left|P((1-\tau)e^{i\theta})\right| \dd \mu(\theta) \right)
		\end{align*}
		and arguing in the same way as above. Altogether \eqref{eq:upper_bound_for_roots} and \eqref{eq:lower_bound_for_roots} gives the proof of Theorem \ref{thm:main_theorem}.
	\end{proof}
	\begin{remark*}
		There is an alternative way of showing that the lower bound \eqref{eq:lower_bound_for_roots} holds once we have proved the upper bound \eqref{eq:upper_bound_for_roots}. An immediate corollary of a result by Konyagin and Schlag \cite[Theorem~1.2]{konyagin_schlag} is that
		\begin{equation}
			\label{eq:no_unimodular_zeros}
			\lim_{n\to\infty}\bP\left(\nu_n(\{|z|=1\}) > 0 \right) = 0.
		\end{equation}
		Let $Q(z):= z^{n-1}Q(1/z)$ and observe that $P(a)=0$ if and only if $Q(1/a)=0$. Since the coefficients of $P$ are i.i.d. random variables, $P$ and $Q$ have the same distribution and the lower bound $\eqref{eq:lower_bound_for_roots}$ follows from \eqref{eq:upper_bound_for_roots} and \eqref{eq:no_unimodular_zeros}.
	\end{remark*}	
	\section{Concentration of the Logarithmic integral}
	\label{sec:concentration_of_log_integral}
	In this section we prove Lemma \ref{lemma:mean_of_log_mahler_measure}. Throughout this section $r\in [1-n^{-11/10} , 1+n^{-11/10}]$ and all constants will be uniform in $r$. We note that it is evident from \eqref{eq:def_radial_variance} that $\sigma(r)^2 = n + \mathcal{O}(n^{9/10})$ for all $r\in [1-n^{-11/10},1+n^{-11/10}]$.
	
	As noted in the introduction, the main technical difficulty in proving Lemma \ref{lemma:mean_of_log_mahler_measure} is the singularity of the logarithm near zero, which in turn causes problems when we want to apply the dominated convergence theorem. We deal with this problem through the following proposition.
	\begin{proposition}
		\label{proposition:bound_on_average_small_ball_probability}
		There exist a constant $C>0$ such that for all $a\in (0,\frac{1}{3})$ and for all $r\in [1-n^{-11/10},1+n^{-11/10}]$ we have
		\[
		\int_{-\pi}^{\pi} \bP\left(|P(re^{i\theta})| \leq a\right) \dd\mu(\theta) \leq C\left(\frac{1}{n^5} + n^{240}a \log(a^{-1})\right).
		\]
	\end{proposition}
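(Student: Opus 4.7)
The plan is to bound the pointwise small--ball probability $\bP(|P(re^{i\theta})|\le a)$ at each fixed $\theta$ and then integrate in $\theta$; the very large slack factor $n^{240}$ in the statement signals that crude union bounds and rough discretizations will suffice. I expect the argument to split a ``generic'' range of $\theta$ (where $P$ is genuinely two--dimensional) from a ``degenerate'' window around $\theta\in\{0,\pi\}$ (where $P$ is close to real--valued).

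For fixed $z=re^{i\theta}$ I would view $P(z)=\sum_{k=0}^{n-1} X_k c_k$ as a Rademacher sum in the plane with $c_k=r^k e^{ik\theta}\in\bR^2$, and apply the two--dimensional Esseen inequality
\[
\bP(|P(z)|\le a) \lesssim a^2 \int_{|\xi|\le 1/a} \prod_{k=0}^{n-1}|\cos\langle \xi,c_k\rangle|\,\dd m(\xi).
\]
The covariance matrix $\Sigma(\theta)=\sum_k c_k c_k^{T}$ has trace $\sigma(r)^2\sim n$, and its determinant equals $\tfrac14\sigma(r)^4-\tfrac14\bigl|\sum_k r^{2k}e^{2ik\theta}\bigr|^2$, which is bounded below by a constant multiple of $n^2$ whenever $|\sin\theta|$ exceeds a suitable negative power of $n$. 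Using $|\cos t|\le\exp(-c\min(t^2,1))$, this spectral lower bound turns the Esseen integrand into a Gaussian in $\xi$ and produces a two--dimensional small--ball bound of order $a^2/n$ throughout the generic range. On the complementary narrow window where $|\sin\theta|$ is small, $P(z)$ collapses toward a real random variable, and I would instead apply the classical Erd\H{o}s--Littlewood--Offord bound to $\mathrm{Re}\,P(z)$, whose variance is still comparable to $n$, obtaining the weaker one--dimensional estimate $\bP(|P(z)|\le a)\lesssim (a+1)/\sqrt n$. Choosing the window's measure to be at most a small negative power of $n$ absorbs its contribution to the $\theta$--integral into the $n^{-5}$ remainder of the statement.

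To transfer the pointwise estimates into the continuous integral, I would discretize $[-\pi,\pi]$ at scale $n^{-C}$ for a large constant $C$ and exploit Bernstein's inequality $|P'|\le n\|P\|_\infty$ together with the deterministic bound $\|P\|_\infty\le n$ (sharpened on an event of probability $\le n^{-5}$ via Salem--Zygmund) to control $|P|$ on each short arc by its value at the nearest grid point. A union bound over $n^{C}$ grid points of the pointwise small--ball estimates then produces the main $a$--linear contribution; the logarithmic factor $\log(a^{-1})$ emerges from summing the resulting bounds over the dyadic scales $a,2a,4a,\dots$ needed to control the blow--up of the Esseen integrand as $1/a$ grows, and the polynomial--in--$n$ prefactors compound into $n^{240}$.

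The principal obstacle is producing a uniform spectral lower bound on $\Sigma(\theta)$ that degrades in a quantitatively controlled way as $\theta\to 0,\pi$, and gluing it to the one--dimensional estimate on the degenerate window without leaving a gap in the range of $a$. The generosity of the prefactor $n^{240}$ is precisely the budget for carrying out this gluing crudely (using basic Esseen, classical Erd\H{o}s--Littlewood--Offord, and elementary Bernstein) rather than via sharp Hal\'asz or Konyagin--Schlag type machinery.
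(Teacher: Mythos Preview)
Your plan has a genuine gap at the step where you bound the Esseen integrand by a Gaussian. The inequality $|\cos t|\le \exp(-c\min(t^2,1))$ is simply false: $|\cos t|=1$ at every multiple of $\pi$. The correct pointwise bound is $|\cos t|\le \exp\bigl(-c\,\lVert t/\pi\rVert_{\bR/\bZ}^2\bigr)$, and this is useless once $|\xi|$ is large, because the arguments $\langle\xi,c_k\rangle$ can land near $\pi\bZ$ for many $k$ simultaneously. Your covariance lower bound on $\Sigma(\theta)$ therefore controls $\prod_k|\cos\langle\xi,c_k\rangle|$ only for $|\xi|=O(1)$; it says nothing about the range $1\ll|\xi|\le 1/a$, which is the entire difficulty when $a$ is tiny. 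Nothing later in your outline (dyadic summation, discretization, Bernstein) addresses this: those devices reshuffle pointwise bounds but do not produce decay of the characteristic function at large frequencies.

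There is a second gap in the treatment of the degenerate window. The two--dimensional covariance $\Sigma(\theta)$ becomes singular on an arc of measure $\gtrsim 1/n$ around $\theta\in\{0,\pi\}$ (since $\bigl|\sum_k r^{2k}e^{2ik\theta}\bigr|$ stays of order $n$ whenever $|\theta|\lesssim 1/n$), and on that arc the Erd\H{o}s--Littlewood--Offord bound you invoke gives only $\bP(|P|\le a)\lesssim n^{-1/2}$, \emph{independently of $a$}. The resulting contribution $\gtrsim n^{-3/2}$ cannot be absorbed into $n^{-5}$; for $a$ below any fixed negative power of $n$ it also swamps the term $n^{240}a\log(1/a)$. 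The window cannot be shrunk further while keeping $\Sigma$ uniformly nondegenerate on the complement.

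The paper's proof sidesteps both issues by working differently: it applies the one--dimensional Esseen inequality to $\re P(re^{i\theta})$, then swaps the order of integration so that the $\theta$--average is taken \emph{inside}. The point is that for each fixed frequency $\lambda$ the set of $\theta$ for which $\prod_k|\cos(\lambda r^k\cos k\theta)|$ fails to be exponentially small has small $\mu$--measure; this is established by a tenth--moment bound on $\sum_k\cos(2\lambda r^k\cos k\theta)$, which in turn rests on a stationary--phase estimate (Claim~\ref{claim:stationary_phase_for_sparse_trig_poly}) proved via Tur\`an's lemma. That averaging step is the missing idea in your plan, and it is precisely what converts the otherwise unmanageable large--$\lambda$ regime into the harmless $n^{240}a\log(1/a)$ term.
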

	Proposition \ref{proposition:bound_on_average_small_ball_probability} is essentially borrowed from \cite{ibragimov_zeitouni} and, for the reader's convenience, we give its proof in Section \ref{sec:proof_of_prop}. 
	\begin{remark*}
		It should be mentioned here that one could go through a different route to prove uniform integrability of the logarithmic integral. Actually, the much more general (and deeper) log-integrability result of Nazarov, Nishry and Sodin \cite[Corallary~1.2]{nazarov_nishry_sodin} will be enough for our purpose. Still, we provide a proof that does not use \cite{nazarov_nishry_sodin} to keep this note as self-contained as possible. 
	\end{remark*}
	
	For every fixed $\theta\in [-\pi,\pi]$ denote by
	\[
	F_{n}^{\theta}(x):= \bP\left(|\ti{P}(re^{i\theta})|^2\leq x\right),\quad F(x):= 1- e^{-x}.
	\]
	Further, for different values of $\theta \not=\phi \in [-\pi,\pi]$ we denote by
	\[
	F_{n}^{\theta,\phi}(x,y):= \bP\left(|\ti{P}(re^{i\theta})|^2\leq x,\ |\ti{P}(re^{i\phi})|^2\leq y\right).
	\]
	\begin{lemma}
		\label{lemma:central_limit_theorem}
		There exist an absolute constant $C>0$ such that:
		\begin{enumerate}
			\item for all $|\theta| \ge n^{-1/2}$ and for all $x\in (0,\infty)$
			\[
			\left|F_{n}^{\theta}(x) - F(x)\right| \leq \frac{C}{\sqrt{n}}.
			\]
			\item for all $\theta,\phi\not\in [-n^{-1/2},n^{-1/2}]$ such that $|\theta-\phi|>n^{-1/2}$
			\[
			\left|F_{n}^{\theta,\phi}(x,y) - F(x)F(y)\right| \leq \frac{C}{\sqrt{n}}
			\]
			for all $x,y\in (0,\infty)$.
		\end{enumerate}
	\end{lemma}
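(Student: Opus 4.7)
\emph{Overall strategy.} Split $P(re^{i\theta}) = U_\theta + i V_\theta$ into real and imaginary parts with $U_\theta = \sum_{k=0}^{n-1} X_k r^k \cos(k\theta)$ and $V_\theta = \sum_{k=0}^{n-1} X_k r^k \sin(k\theta)$, so that the normalized vector $W_\theta := (U_\theta, V_\theta)/\sigma(r)$ is a sum of $n$ independent bounded random vectors $X_k \cdot (r^k/\sigma(r)) \cdot (\cos(k\theta), \sin(k\theta))$ in $\bR^2$. The plan is (i) to show $\mathrm{Cov}(W_\theta)$ is $O(1/\sqrt n)$-close to $\tfrac12\mathrm{Id}$, and then (ii) apply a multivariate Berry--Esseen bound over convex sets (Bentkus's theorem) to deduce that the law of $|W_\theta|^2 = |\tilde P(re^{i\theta})|^2$ is within $O(1/\sqrt n)$ of the $\mathrm{Exp}(1)$ distribution, whose CDF equals $F(x) = 1-e^{-x}$.

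\emph{Fourier/covariance estimates.} Using $2\cos^2 = 1+\cos 2$ and $2\sin\cos = \sin 2$, a direct calculation gives
\[
\mathrm{Cov}(W_\theta) = \tfrac12\mathrm{Id} + \frac{1}{2\sigma(r)^2}\begin{pmatrix} \mathrm{Re}\, S(2\theta) & \mathrm{Im}\, S(2\theta) \\ \mathrm{Im}\, S(2\theta) & -\mathrm{Re}\, S(2\theta) \end{pmatrix}, \qquad S(\alpha) := \sum_{k=0}^{n-1} r^{2k} e^{ik\alpha}.
\]
Summing the geometric series yields $|S(\alpha)| \le 2/|1 - r^2 e^{i\alpha}| \lesssim 1/|\alpha|$, uniformly for $r \in [1-n^{-11/10}, 1+n^{-11/10}]$. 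The hypothesis $|\theta|\ge n^{-1/2}$ then gives $|S(2\theta)| \lesssim \sqrt n$, and dividing by $\sigma(r)^2 \sim n$ yields $\|\mathrm{Cov}(W_\theta) - \tfrac12\mathrm{Id}\| = O(1/\sqrt n)$.

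\emph{Applying Berry--Esseen.} Each summand has Euclidean norm at most $r^k/\sigma(r) \le C/\sqrt n$, so the total third-moment sum is bounded by $(C/\sqrt n)\cdot \mathrm{tr}(\mathrm{Cov}\, W_\theta) \lesssim 1/\sqrt n$. Bentkus's convex-set multivariate CLT then gives $\sup_{A\text{ convex}} |\bP(W_\theta\in A) - \bP(Z\in A)| \lesssim 1/\sqrt n$ for $Z \sim N(0, \mathrm{Cov}(W_\theta))$. Taking $A = \{x_1^2+x_2^2 \le x\}$ and comparing $Z$ with $Z'\sim N(0, \tfrac12\mathrm{Id})$ (a density-comparison step costing another $O(\|\mathrm{Cov}(W_\theta) - \tfrac12\mathrm{Id}\|) = O(1/\sqrt n)$) yields part (1), since $|Z'|^2 \sim \mathrm{Exp}(1)$. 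Part (2) is the same argument in $\bR^4$ applied to the stacked vector $(W_\theta, W_\phi)$: the off-diagonal $2\times 2$ covariance block reduces via product-to-sum identities to linear combinations of $S(\theta\pm\phi)$, controlled by $1/|\theta-\phi| + 1/|\theta+\phi|$. The given hypothesis $|\theta-\phi|>n^{-1/2}$ handles the first term; reflection symmetry $P(re^{-i\theta}) = \overline{P(re^{i\theta})}$ means the analogous bound $|\theta+\phi|\gtrsim n^{-1/2}$ is needed for the naive joint independence to hold, and is the one additional input required.

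\emph{Main obstacle.} The Fourier/geometric-sum estimates on $S$ are essentially mechanical; the delicate step is invoking Bentkus's inequality with a covariance matrix that is not exactly $\tfrac12\mathrm{Id}$ but only $O(1/\sqrt n)$-close to it, and correctly handling the case $\phi \approx -\theta$ in part (2), where the reflection symmetry of real-coefficient polynomials prevents the two values $|\tilde P(re^{\pm i\theta})|$ from being asymptotically independent without a separation assumption on $|\theta+\phi|$.
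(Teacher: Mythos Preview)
Your approach is essentially identical to the paper's: write the covariance matrix of $(U_\theta,V_\theta)/\sigma(r)$ (and its four-dimensional analogue) explicitly, reduce the off-$\tfrac12\mathrm{Id}$ entries to geometric sums $S(\alpha)=\sum r^{2k}e^{ik\alpha}$ controlled by $O(\sqrt n)$ under the stated separation hypotheses, apply a multivariate Berry--Esseen bound (you invoke Bentkus over convex sets, the paper cites Bhattacharya--Rao, Corollary~17.2; these are interchangeable here), and finish by comparing the Gaussian with covariance $V$ to the one with covariance $\tfrac12\mathrm{Id}$ via a direct density estimate.

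One remark: you correctly flag that the off-diagonal block of $\mathrm{Cov}(W_\theta,W_\phi)$ involves $S(\theta+\phi)$ as well as $S(\theta-\phi)$, and that the reflection identity $|\tilde P(re^{-i\theta})|=|\tilde P(re^{i\theta})|$ forces asymptotic \emph{dependence} when $\phi\approx-\theta$. Indeed, taking $\phi=-\theta$ with $|\theta|\ge n^{-1/2}$ satisfies all hypotheses of part~(2) yet gives $F_n^{\theta,\phi}(x,y)=F_n^{\theta}(\min(x,y))$, which is not close to $F(x)F(y)$. The paper's proof has the same tacit gap (Claim~5.1 is applied with $\eta=\theta+\phi$ without verifying $|\theta+\phi|\ge n^{-1/2}$, and similarly $\eta=2\theta$ is problematic near $\theta=\pm\pi$). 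This is harmless for the application in Section~3: one simply enlarges the excluded set $D_n$ by the strips $\{|\theta+\phi|\le n^{-1/2}\}$ and $\{\pi-|\theta|\le n^{-1/2}\}$, etc., which still has measure $O(n^{-1/2})$. But it is a genuine omission in the lemma's statement, and you are right to single it out.
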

	Lemma \ref{lemma:central_limit_theorem} is a simple consequence of a Central Limit Theorem for a sum of independent (but not identically distributed) random vectors, with the usual Berry-Esseen bound on the error. It is not particularly interesting, and we postpone the proof to Section \ref{sec:proof_of_clt}.
	
	\begin{proof}[Proof of Lemma \ref{lemma:mean_of_log_mahler_measure}]
		For concreteness and brevity we only prove the case $\ell=2$ (the case $\ell=1$ is simpler). Denote for the proof $p(\theta) := \ti{P}(re^{i\theta})$, and consider the event
		\[
		\cA_{\theta} := \left\{|P(re^{i\theta})| \leq n^{-A}  \right\}
		\] 
		for some large absolute constant $A \ge 1$ that we choose later. Since the function $\log(x)$ is locally integrable, we may apply Fubini and see that
		\begin{align*}
			\bE&\left[\left(\int_{-\pi}^{\pi} \log |\ti{P}(re^{i\theta})| \dd\mu(\theta) \right)^2\right] \\ &= \bE\left[\iint_{[-\pi,\pi]^2} \log|p(\theta)| \log|p(\phi)| \dd\mu(\theta) \dd\mu(\phi)\right] \\ &= \bE\left[\iint_{[-\pi,\pi]^2} L(\theta,\phi) \, \mathbf{1}_{\cA_\theta^{c}}\mathbf{1}_{\cA_\phi^{c}} \, \dd\mu(\theta) \dd\mu(\phi)\right] + 2\bE\left[\iint_{[-\pi,\pi]^2} L(\theta,\phi) \, \mathbf{1}_{\cA_\theta}\mathbf{1}_{\cA_\phi^{c}} \, \dd\mu(\theta) \dd\mu(\phi)\right] \\ & \qquad + \bE\left[\iint_{[-\pi,\pi]^2} L(\theta,\phi) \, \mathbf{1}_{\cA_\theta}\mathbf{1}_{\cA_\phi} \, \dd\mu(\theta) \dd\mu(\phi)\right] =: E_1 + 2E_2 + E_3
		\end{align*}
		where $L(\theta,\phi) := \log|p(\theta)| \log|p(\phi)|$. We first show that $E_2$ and $E_3$ are small, and then turn to prove that the main term coming from $E_1$ is as claimed. 
		
		Note that since $P(z) = \prod_{j=1}^{n}(z-\beta_j)$ has $n$ zeros with $|\beta_j|\in \left[\frac{1}{2},2\right]$ for all $j=1,\ldots,n$ we have the simple bound 
		\begin{equation}
			\label{eq:naive_bound_on_integral_log}
			\int_{-\pi}^{\pi} \left|\log|p(\theta)|\right|^{4}\dd\mu(\theta) \lesssim  \sum_{j_1,\ldots,j_4=1}^{n} \left(\int_{-\pi}^{\pi} \prod_{\ell=1}^{4} \left|\log| e^{i\theta} - \beta_{j_\ell}|\right| \dd \mu(\theta)\right) \lesssim n^4
		\end{equation}
		where the last inequality is true since there are $n^{4}$ terms in the sum, all are uniformly bounded since $|\beta_j|\in \left[\frac{1}{2},2\right]$.
		
		By applying Tonelli's theorem, Cauchy-Schwarz inequality (twice) and using \eqref{eq:naive_bound_on_integral_log} we get
		\begin{align*}
			|E_3| &\leq \iint_{[-\pi,\pi]^2} \bE \left[ \left|\log|p(\theta)|\right| \left|\log|p(\phi)|\right| \mathbf{1}_{\cA_{\theta}} \mathbf{1}_{\cA_{\phi}} \right] \dd\mu(\theta) \dd\mu(\phi) \\ &\leq \left(\int_{-\pi}^{\pi} \sqrt{\bE\left[\log^2|p(\theta)|\mathbf{1}_{\cA_{\theta}}\right]} \dd \mu(\theta) \right)^2 \\ & \leq\int_{-\pi}^{\pi} \sqrt{\bE\left[\log^4|p(\theta)|\right] \bP\left(\cA_{\theta}\right)} \dd\mu(\theta) \lesssim n^2 \times \left(\int_{-\pi}^{\pi} \bP\left(\cA_\theta\right) \dd \mu(\theta)\right)^{1/2} \lesssim \frac{1}{\sqrt{n}}
		\end{align*}
		where the last inequality is Proposition \ref{proposition:bound_on_average_small_ball_probability} applied with $a=n^{-A}$ and $A$ large enough; $A\ge 250$ is good for our purpose.
		
		Similarly to the above we can show that $|E_2| \lesssim n^{-1/2}$ so it remains to show that $E_1$ gives the main term, this we do in what follows.
	
		Let $(\theta,\phi)$ be coordinates in the square $[-\pi,\pi]^2$ and consider the subset
		\begin{align*}
			D_n:= \left\{|\theta- \phi| \leq n^{-1/2} \right\}\cup \{|\theta|\leq n^{-1/2} \} \cup \{|\phi|\leq n^{-1/2} \} \subset [-\pi,\pi]^2.
		\end{align*} 
		Since the Lebesgue measure of $D_n$ is small we have
		\begin{align*}
			\iint_{D_n} |\log|p(\theta)|\log|p(\phi)||&\mathbf{1}_{\cA_\theta^{c}}\mathbf{1}_{\cA_\phi^{c}} \dd\mu(\theta) d\mu(\phi) \\ &\lesssim_{A} (\log n)^2 m(D_n)\lesssim_A  \frac{(\log n)^2}{\sqrt{n}}
		\end{align*}
		so the proof of the lemma will follow once we show that 
		\begin{equation}
			\label{eq:expectation_of_log_outside_bad_set}
			\bE\left[\log|p(\theta)|\log|p(\phi)|\mathbf{1}_{\cA_\theta^{c}}\mathbf{1}_{\cA_\phi^{c}}\right] = \frac{\gamma^2}{4} + \mathcal{O}\left(\frac{\log^2 n}{\sqrt{n}}\right)
		\end{equation}
		uniformly in $(\theta,\phi)\in [-\pi,\pi]^2 \setminus D_n$. It is routine to check that
		\begin{align*}
			\int_{n^{-A}}^{n} (\log x) \dd F(x) &= \int_{0}^{\infty}(\log x) e^{-x} \dd x + \mathcal{O}_{A}\left(\frac{\log n}{n^{A}}\right) \\ &= -\gamma + \mathcal{O}_{A}\left(\frac{\log n}{n^{A}}\right).
		\end{align*}
		Furthermore, since $(\theta,\phi)\not\in D_n$ we may apply item 2 in Lemma \ref{lemma:central_limit_theorem} (for the case $\ell=1$ we need to apply item 1) and see that
		\begin{align*}
			&\left|\bE\left[\log|p(\theta)|^2\log|p(\phi)|^2\mathbf{1}_{\cA_\theta^{c}}\mathbf{1}_{\cA_\phi^{c}}\right] - \gamma^2\right| \\ &\leq \int_{n^{-A}}^{n} \int_{n^{-A}}^{n} |\log(x) \log(y)| \dd \left(F_n^{\theta,\phi}(x,y) - F(x)F(y) \right) + \mathcal{O}_{A}\left(\frac{\log n}{n^{A}}\right) \\ & \lesssim \int_{n^{-A}}^{n} \int_{n^{-A}}^{n} \frac{|F_n^{\theta,\phi}(x,y) - F(x)F(y)|}{xy} \dd x \dd y + \mathcal{O}_{A}\left(\frac{\log n}{n^{A}}\right) \lesssim_{A} \frac{(\log n)^2}{\sqrt{n}} 
		\end{align*}
		and hence we get \eqref{eq:expectation_of_log_outside_bad_set} and finish the proof.
	\end{proof}

	\section{Proof of Proposition \ref{proposition:bound_on_average_small_ball_probability}}
	\label{sec:proof_of_prop}
	In this section we prove Proposition \ref{proposition:bound_on_average_small_ball_probability}. For this, we will make use of the classical Tur\`{a}n's Lemma, quoted here for the convenience of the reader.
	\begin{lemma}[{\cite[Chapter~5.3]{montgomery}, \cite[Section~1.1]{nazarov}}]
		\label{lemma:turan_lemma}
		Let $T(x) := \sum_{k=0}^{h} c_k e^{im_k x}$ be a trigonometric polynomial with $c_k\in \bC$ and $m_1 < m_2 < \ldots < m_h\in \bZ$. Then for any interval $E\subset [-\pi,\pi]$ we have
		\[
		\sup_{x\in [-\pi,\pi]} |T(x)| \leq \left(\frac{12}{\mu(E)}\right)^{h-1} \sup_{x\in E} |T(x)|.
		\]
	\end{lemma}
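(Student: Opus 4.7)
The statement is the classical Tur\'an lemma for exponential sums with integer frequencies. My plan is to reduce to an equivalent sparse-polynomial inequality on an arc of the unit circle, and then to establish that inequality via a Lagrange-type interpolation formula together with generalized Vandermonde estimates.

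First I would set $z=e^{ix}$. Let $x_0\in[-\pi,\pi]$ be a point where $|T|$ attains its supremum, and write $z_0=e^{ix_0}$. Multiplying $T$ by $e^{-im_1 x}$ leaves $|T|$ unchanged, so setting $n_k:=m_k-m_1$ we may assume $0=n_1<n_2<\cdots<n_h$. Then $T(x)=Q(e^{ix})$ with $Q(z)=\sum_{k=1}^{h}c_k z^{n_k}$, and $E$ corresponds to an arc $\Gamma\subset\partial\bD$ of length $|E|$. The lemma reduces to the sparse-polynomial inequality
\[
|Q(z_0)|\le\left(\frac{12}{|\Gamma|}\right)^{h-1}\sup_{z\in\Gamma}|Q(z)|
\]
for any $h$-term polynomial $Q$ of the above form and any $z_0\in\partial\bD$.

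To extract $Q(z_0)$ from values of $Q$ on $\Gamma$, I would pick $h$ nodes $w_1,\ldots,w_h$ equally spaced on $\Gamma$. The generalized Vandermonde matrix $V=(w_j^{n_k})_{j,k=1}^{h}$ is invertible (the characters $z\mapsto z^{n_k}$ are linearly independent on any arc), so solving $Vc=(Q(w_j))_j$ for the coefficient vector and substituting into $Q(z_0)=\sum_k c_k z_0^{n_k}$ yields a Lagrange-type identity
\[
Q(z_0)=\sum_{j=1}^{h}\lambda_j(z_0)\,Q(w_j),\qquad \lambda_j(z_0)=\sum_{k=1}^{h} z_0^{n_k}(V^{-1})_{kj}.
\]
It thus suffices to prove $\sum_{j=1}^{h}|\lambda_j(z_0)|\le(12/|\Gamma|)^{h-1}$; by Cramer's rule each $\lambda_j(z_0)$ is a ratio of generalized Vandermonde determinants whose nodes are drawn from $\{w_1,\ldots,w_h,z_0\}$.

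The main obstacle is bounding these generalized Vandermonde determinants with a constant depending only on $h$ and $|\Gamma|$, not on the sparse exponents $n_k$. In the consecutive-exponent case the determinant factors as $\prod_{j<l}(w_l-w_j)$, but for arbitrary integer exponents there is no such product formula. The standard way around this is to factor out the ordinary Vandermonde and identify the remaining factor as a Schur polynomial in the $w_j$ indexed by the partition encoding the gaps $n_k-(k-1)$; one then bounds the Schur polynomial via its expansion as a sum of monomials. Feeding these estimates into $\sum_j|\lambda_j(z_0)|$ and using that all nodes lie in an arc of length $|\Gamma|$ produces a bound of the form $(C/|\Gamma|)^{h-1}$, with the value $C=12$ emerging after tracking constants (and being far from sharp). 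The base case $h=1$ is trivial, since $|T|$ is then constant.
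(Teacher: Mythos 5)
The paper does not prove Lemma~\ref{lemma:turan_lemma}; it is quoted verbatim from Montgomery and Nazarov, so there is no internal proof to compare against. That said, your proposed argument has a genuine gap at precisely the step you flag as ``the main obstacle.'' After factoring out the ordinary Vandermonde, the Lagrange coefficient $\lambda_j(z_0)$ is a \emph{ratio} of two Schur polynomials evaluated at two different $h$-tuples drawn from $\{w_1,\ldots,w_h,z_0\}$, and these do not cancel. Expanding a Schur polynomial as a sum of monomials only gives an \emph{upper} bound (on the unit circle, by the number of semistandard tableaux), whereas what you need is a \emph{lower} bound on $|s_\lambda(w_1,\ldots,w_h)|$ in the denominator. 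No such lower bound exists uniformly in the exponents $n_k$: already for $h=2$, $n_1=0$, $n_2=N$, the generalized Vandermonde determinant is $w_2^N-w_1^N$, which vanishes whenever $w_1/w_2$ is an $N$-th root of unity. With equally spaced nodes on a fixed arc, this happens for suitable large $N$, so the interpolation matrix can be singular, and near such configurations the coefficients $\lambda_j(z_0)$ blow up. Any fix would force the nodes to depend on the $n_k$, at which point the constant you track would also depend on the $n_k$ rather than only on $h$ and $\mu(E)$.

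The classical proofs sidestep this by never forming a sparse Vandermonde in the space variable. Tur\'an's power-sum argument evaluates $T$ at an arithmetic progression $x_0, x_0+d, \ldots, x_0+(h-1)d$; writing $T(x_0+jd)=\sum_k b_k\zeta_k^j$ with $\zeta_k=e^{im_k d}$ produces an \emph{ordinary} Vandermonde in the unimodular nodes $\zeta_k$, whose inverse is controlled by explicit Lagrange coefficients independent of the frequencies. Nazarov's approach is different again (and yields the stronger statement for arbitrary measurable $E$), proceeding via a Remez-type comparison rather than interpolation. Either route avoids the Schur-polynomial denominator entirely, which your scheme cannot.
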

	\begin{proof}[Proof of Proposition \ref{proposition:bound_on_average_small_ball_probability}]
		Our starting point is Esseen's concentration inequality, see \cite[Theorem~7.17]{tao_vu}. It states that
		\begin{equation}
			\label{eq:esseen_concent_ineq}
			\bP\left(|P(re^{i\theta})| \leq a\right) \leq \bP\left(|\re P(re^{i\theta})| \leq a\right) \lesssim a \int_{-a^{-1}}^{a^{-1}} |g(\lambda)| \dd\lambda
		\end{equation}
		where $g$ is the characteristic function of $\re (P(re^{i\theta})) = \sum_{k=0}^{n-1} X_k r^k \cos(k\theta)$. Denote by $\lVert x\rVert_{\bR/\bZ}:= \min\{|x-m|: m\in \bZ\}$. By using the inequalities $|\cos x|\leq \exp(-2\lVert x/\pi \rVert_{\bR/\bZ}^2)$ and $\cos(2x) \ge 1 - 2\pi^2 \lVert x/\pi \rVert_{\bR/\bZ}^2$ valid for all $x\in \bR$ we get
		\begin{align*}
			|g(\lambda)| &= \left|\prod_{k=0}^{n-1} \cos\left(\lambda r^k \cos(k\theta) \right)\right| \\ & \leq \exp\left(-2\sum_{k=0}^{n-1} \lVert \lambda r^k \cos(k\theta)/\pi \rVert_{\bR/\bZ}^2 \right) \leq e^{-n/\pi^2} \exp\left( \frac{1}{\pi^2} \sum_{k=0}^{n-1} \cos(2\lambda r^k\cos(k\theta))\right).
		\end{align*} 
		Combining this with \eqref{eq:esseen_concent_ineq} we use Fubini and get
		\begin{align}
			\label{eq:integral_of_esseen_inequality}
			\int_{-\pi}^{\pi} &\bP\left(|P(re^{i\theta})| \leq a\right) \dd \mu(\theta) \\ \nonumber & \lesssim a \int_{-a^{-1}}^{a^{-1}} e^{-n/\pi^2}\int_{-\pi}^{\pi} \exp\left( \frac{1}{\pi^2} \sum_{k=0}^{n-1} \cos(2\lambda r^k\cos(k\theta))\right) \dd \mu(\theta) \dd \lambda.
		\end{align}
		Fix $\lambda\in[-a^{-1},a^{-1}]$ and denote by
		\begin{align*}
			&\Lambda_1(\lambda,n) := \left\{\theta\in[-\pi,\pi] \mid \sum_{k=0}^{n-1} \cos(2\lambda r^k\cos(k\theta)) < \frac{n}{2} \right\}, \\ &\Lambda_2(\lambda,n):= [-\pi,\pi] \setminus \Lambda_1(\lambda,n).
		\end{align*}
		For all $\theta\in\Lambda_1(\lambda,n)$ the inequality \eqref{eq:integral_of_esseen_inequality} gives an exponential bound in $n$, so we are left to estimate the measure of $\Lambda_2(\lambda,n)$ on the circle. Indeed, denote for the moment by $a_k(\theta):= 2r^{k} \cos(k\theta)$ and note that
		\begin{align}
			\label{eq:opening_brackets_sum_of_cosines}
			\int_{-\pi}^{\pi} \left|\sum_{k=0}^{n-1} \cos(\lambda a_k(\theta)) \right|^{10} \dd \mu(\theta) &\leq \int_{-\pi}^{\pi} \left|\sum_{k=0}^{n-1} e^{i\lambda a_k(\theta)} + e^{-i\lambda a_k(\theta)} \right|^{10} \dd \mu(\theta) \nonumber \\ &= \sum_{s_1,\ldots,s_{10}\in\{\pm 1\}} \sum_{k_1,\ldots k_{10}=0}^{n-1} \int_{-\pi}^{\pi} e^{i\lambda \sum_{\ell=1}^{10} s_\ell a_{k_\ell}(\theta) } \dd \mu(\theta).
		\end{align}
		Fix a tuple $(k_1,\ldots,k_{10})$ in the above sum \eqref{eq:opening_brackets_sum_of_cosines}. For all $\ell\in\{1,\ldots,10\}$ let $m_\ell := \# \left\{ j\in\{1,\ldots,10\} \ \text{such that } k_j = k_\ell \right\}$. For all tuples $(k_1,\ldots,k_{10})$ for which all $\{m_\ell\}_{\ell=1}^{10}$ are even it may happen that $\sum_{\ell=1}^{10} s_\ell a_{k_\ell}(\theta) \equiv 0$ for all $\theta\in[-\pi,\pi]$ for some particular choice of signs $s_1,\ldots,s_{10}$. Still, there are at most $\mathcal{O}(n^5)$ such tuples of $\{k_1,\ldots,k_{10} \}$ in \eqref{eq:opening_brackets_sum_of_cosines}. To deal with the rest of the sum, we have the following claim.
		\begin{claim}
			\label{claim:stationary_phase_for_sparse_trig_poly}
			Let $T(\theta):= \sum_{\ell=1}^{h} b_\ell e^{i k_{\ell}\theta}$ be a non-constant trigonometric polynomial with $b_\ell\in[-1,-\frac{1}{2}]\cup[\frac{1}{2},1]$ and $k_\ell\in \{-n,\ldots,n\}$ for all $\ell=1,\ldots,h$. Then, for all $c>0$,
			\[
			\left|\int_{-\pi}^{\pi} e^{i\lambda T(\theta) } \dd \mu(\theta)\right| \lesssim_{h} \frac{1}{n^{c}} + \frac{n^{2h(c+2)}}{\lambda}.
			\]
		\end{claim}
		Assuming Claim \ref{claim:stationary_phase_for_sparse_trig_poly} for the moment, we plug into \eqref{eq:opening_brackets_sum_of_cosines} with
		\[
		T(\theta) := \sum_{\ell=1}^{10} s_\ell a_{k_\ell}(\theta) = \sum_{\ell=1}^{10} s_\ell r^{k_{\ell}} \left(e^{ik_\ell \theta} + e^{-ik_\ell \theta} \right)
		\]
		and $c=5$ to obtain
		\begin{align*}
			\int_{-\pi}^{\pi} \left|\sum_{k=0}^{n-1} \cos(\lambda a_k(\theta)) \right|^{10} \dd \mu(\theta) &\lesssim n^{5} + n^{10}\times \left(\frac{1}{n^5} + \frac{n^{240}}{\lambda}\right) \\ &\lesssim n^{5} + \frac{n^{250}}{\lambda}.
		\end{align*}
		By applying Markov's inequality we get that
		\begin{align*}
			\mu\left( \Lambda_2(\lambda,n) \right) \lesssim \min\left\{\frac{n^5 + n^{250}/\lambda}{n^{10}} ,1 \right\} = \min\left\{\frac{1}{n^5} + \frac{n^{240}}{\lambda} , 1\right\}.
		\end{align*}
		Plugging the above into \eqref{eq:integral_of_esseen_inequality} gives
		\begin{align*}
			\int_{-\pi}^{\pi} \bP\left(|P(re^{i\theta})| \leq a\right) \dd \mu(\theta) & \lesssim e^{-n/2\pi^2} + a\int_{-a^{-1}}^{a^{-1}}\mu\left(\Lambda_2(\lambda,n)\right) \dd \lambda\\ & \lesssim  e^{-n/2\pi^2} + a + a\int_{1}^{a^{-1}}\mu\left(\Lambda_2(\lambda,n)\right) \dd \lambda \\ &\lesssim e^{-n/2\pi^2} + a + \frac{1}{n^5} + n^{240}a \log(a^{-1}).
		\end{align*}
	\end{proof}
	\begin{proof}[Proof of Claim \ref{claim:stationary_phase_for_sparse_trig_poly}]
		Let $\theta_1< \theta_2 < \ldots < \theta_L \in [-\pi,\pi]$ be all points on the unit circle which are solutions to the equation
		\[
		|T^\prime(\theta_j)| = n^{-h(c+1)}, \qquad j=1,\ldots,L.
		\] 
		Since $T^\prime$ is a trigonometric polynomial of degree at most $n$, we have $L\leq 2n$. We split the unit circle into $L$ intervals given by
		\[
		I_j:=[\theta_j,\theta_{j+1}] \quad \text{for } j=1,\ldots L-1 \quad  \text{and} \quad I_L:= [-\pi,\pi] \setminus \left(\bigcup_{j=1}^{L-1} I_j\right).
		\]
		We say that $I_j$ is \emph{good} if $|T^\prime(\theta)| \geq n^{-h(c+1)}$ for all $\theta\in I_j$. We say that $I_j$ is \emph{bad} if it is not good. Denote by $\mathcal{G}$ the collection of good intervals and by $\mathcal{B}$ the collection of bad intervals. First, we claim that for each $I\in \mathcal{B}$ we must have $\mu(I) \leq n^{-(c+1)}$. Assume otherwise, then by our construction there exist an interval $I\subset [-\pi,\pi]$ with
		\[
		\mu(I) > \frac{1}{n^{c+1}},\quad \text{and } \ \sup_{\theta\in I}|T^\prime(\theta)| \leq n^{-h(c+1)}.
		\] 
		Tur\`{a}n's Lemma (Lemma \ref{lemma:turan_lemma}) yields that
		\[
		\sup_{\theta\in [-\pi,\pi]} |T^\prime(\theta)| \leq \left(\frac{12}{\mu(I)}\right)^{h-1} n^{-h(c+1)} \lesssim_{h} n^{(c+1)(h-1)} n^{-h(c+1)} = n^{-1-c}  
		\]
		which clearly contradicts the fact that $\int_{-\pi}^{\pi} |T^{\prime}|^2 \dd \mu \ge 1/4$ as our polynomial $T$ is non-constant and $k_\ell$ are integers. Altogether we get that
		\[
		\mu\left(\bigcup_{I\in \mathcal{B}} I\right) \leq \frac{2n}{n^{1+c}} \lesssim \frac{1}{n^c}.
		\]
		To conclude the claim, it remains to integrate by parts on the good intervals and see that
		\begin{align*}
		\left|\int_{-\pi}^{\pi} e^{i\lambda T(\theta)} \dd \mu(\theta)\right| & \leq \frac{1}{n^c} + \sum_{I\in \mathcal{G}} \left|\int_{I} e^{i\lambda T(\theta)} \dd \mu(\theta) \right| \\ & \lesssim \frac{1}{n^c} +  \frac{n^{h(c+1)}}{\lambda} + \sum_{I\in \mathcal{G}}\frac{1}{\lambda}\left|\int_{I} e^{i\lambda T(\theta)} \frac{T^{\prime\prime}(\theta)}{(T^\prime(\theta))^2} \dd \mu(\theta) \right| \lesssim \frac{1}{n^c} + \frac{n^{2h(c+2)}}{\lambda}
		\end{align*}
		as claimed.
	\end{proof}
	\section{Proof of Lemma \ref{lemma:central_limit_theorem}}
	\label{sec:proof_of_clt}
	Throughout this section we fix the parameters $r,\theta,\phi$ as prescribed in the statement of Lemma \ref{lemma:central_limit_theorem}. We will also re-use the notation $p(\theta):= \ti{P}(re^{i\theta})$. As indicated in the previous section, the proof of Lemma \ref{lemma:central_limit_theorem} will follow from a version of Berry-Esseen inequality on the remainder term in the Central Limit theorem, when considering a sum of independent (but not identically distributed) random vectors with uniformly bounded third moment, see \cite[Corllary~17.2]{bha_rao}. Hence, we only need to check that the covariance structure of 
	\begin{equation*}
		p(\theta) = \frac{1}{\sigma(r)}\left(\sum_{k=0}^{n-1} X_k r^k \cos(k\theta), \sum_{k=0}^{n-1} X_k r^k \sin(k\theta) \right) \in \bR^2 
	\end{equation*}
	and of
	\begin{align*}
		&\left(p(\theta),p(\phi)\right) \\ &= \frac{1}{\sigma(r)}\left(\sum_{k=0}^{n-1} X_k r^k \cos(k\theta), \sum_{k=0}^{n-1} X_k r^k \sin(k\theta) ,\sum_{k=0}^{n-1} X_k r^k \cos(k\phi), \sum_{k=0}^{n-1} X_k r^k \sin(k\phi) \right) \in \bR^4
	\end{align*}
	scale like $\frac{1}{2}\text{Id}$ in the respective dimension. 
	
	Denote by
	\begin{equation}
		\label{eq:definition_of_covariance_2_d}
		V(\theta) := \frac{1}{\sigma(r)^2} \sum_{k=0}^{n-1} r^{2k} \begin{bmatrix}
		\cos^2(k\theta) & \cos(k\theta)\sin(k\theta) \\ \cos(k\theta)\sin(k\theta) & \sin^2(k\theta)
		\end{bmatrix},
	\end{equation}
	and by
	\begin{align}
	\nonumber
	V(\theta,\phi) &:= \frac{1}{\sigma(r)^2} \sum_{k=0}^{n-1} r^{2k} \\ \nonumber &\qquad \times \begin{bmatrix}
	\cos^2(k\theta) & \cos(k\theta)\sin(k\theta) & \cos(k\theta)\cos(k\phi) & \cos(k\theta)\sin(k\phi) \\ \cos(k\theta)\sin(k\theta) & \sin^2(k\theta) & \sin(k\theta)\cos(k\phi) & \sin(k\theta)\sin(k\phi) \\ \cos(k\phi)\cos(k\theta) & \cos(k\phi)\sin(k\theta) & \cos^2(k\phi) & \cos(k\phi)\sin(k\phi) \\ \sin(k\phi)\cos(k\theta) & \sin(k\phi)\sin(k\theta) & \sin(k\phi)\cos(k\phi) & \sin^2(k\phi)
	\end{bmatrix}. \\ \label{eq:definition_of_covariance_4_d} 
	\end{align}
	Using \eqref{eq:definition_of_covariance_2_d}, \eqref{eq:definition_of_covariance_4_d} and the trigonometric identities
	\begin{align*}
		\cos(x)\cos(y) = \frac{\cos(x-y) + \cos(x+y)}{2}, \quad   \cos(x) \sin(y) &= \frac{\sin(x+y) - \sin(x-y)}{2} \\ \text{and} \quad \sin(x)\sin(y) &= \frac{\cos(x-y) - \cos(x+y)}{2},
	\end{align*}
	Lemma \ref{lemma:central_limit_theorem} will follow from \cite[Corllary~17.2]{bha_rao} combined with the following simple claim.
	\begin{claim}
		\label{claim:cancellation_in_trigonometric_sums}
		Let $\eta\in [-\pi,\pi] \setminus [-n^{-1/2},n^{-1/2}]$, then
		\begin{align*}
			&\frac{1}{\sigma(r)^2} \left|\sum_{k=0}^{n-1} r^{2k} \sin(k \eta)\right| \lesssim \frac{1}{\sqrt{n}},\qquad \frac{1}{\sigma(r)^2} \left|\sum_{k=0}^{n-1} r^{2k} \cos(k \eta)\right| \lesssim \frac{1}{\sqrt{n}}
		\end{align*}
		uniformly in $r\in[1-n^{-11/10} , 1+n^{-11/10}]$.
	\end{claim}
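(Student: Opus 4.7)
The plan is to treat both sums simultaneously by packaging them into the single geometric sum
\[
S_n(\eta,r) := \sum_{k=0}^{n-1} r^{2k} e^{ik\eta} = \sum_{k=0}^{n-1}\bigl(r^2 e^{i\eta}\bigr)^k = \frac{r^{2n}e^{in\eta}-1}{r^2 e^{i\eta}-1},
\]
whose real and imaginary parts are precisely the two quantities in the claim. It then suffices to show that $|S_n(\eta,r)| \lesssim \sqrt{n}$ uniformly, since $\sigma(r)^2 = n + \mathcal{O}(n^{9/10}) \asymp n$ in the prescribed range of $r$.

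The numerator satisfies $|r^{2n}e^{in\eta}-1| \leq r^{2n}+1$, and for $r \in [1-n^{-11/10},1+n^{-11/10}]$ we have $r^{2n} \leq \exp(2n \cdot n^{-11/10}) = \exp(2n^{-1/10}) = 1+o(1)$, so the numerator is uniformly $\mathcal{O}(1)$. For the denominator, use the triangle inequality:
\[
|r^2 e^{i\eta}-1| \geq |e^{i\eta}-1| - |r^2-1| = 2\bigl|\sin(\eta/2)\bigr| - |r^2-1|.
\]
Since $|\sin(\eta/2)| \geq |\eta|/\pi$ for $\eta\in[-\pi,\pi]$, the hypothesis $|\eta|\ge n^{-1/2}$ gives $2|\sin(\eta/2)|\ge 2n^{-1/2}/\pi$, while $|r^2-1|\lesssim n^{-11/10}$, which is negligible compared to $n^{-1/2}$ for $n$ large. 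Therefore $|r^2 e^{i\eta}-1|\gtrsim n^{-1/2}$, and hence
\[
|S_n(\eta,r)| \lesssim \frac{1}{n^{-1/2}} = \sqrt{n}.
\]
Dividing by $\sigma(r)^2 \asymp n$ yields the claimed bound $\mathcal{O}(1/\sqrt{n})$, uniformly in $r$ and in $\eta\in[-\pi,\pi]\setminus[-n^{-1/2},n^{-1/2}]$. There is no real obstacle here — the only point requiring mild care is ensuring the $r^2-1$ perturbation of the denominator is dominated by the $|e^{i\eta}-1|$ term, which is automatic because the range of $r$ is an order of magnitude (in the exponent) tighter than the excluded range of $\eta$.
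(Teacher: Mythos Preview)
Your proof is correct and follows essentially the same approach as the paper: package the two sums as the real and imaginary parts of the geometric series $\sum_{k=0}^{n-1}(r^2 e^{i\eta})^k$, bound the numerator by $\mathcal{O}(1)$ using $r^{2n}=1+o(1)$, and bound the denominator below by $|e^{i\eta}-1|-|r^2-1|\gtrsim n^{-1/2}$. The paper's write-up is nearly identical, differing only in cosmetic details of how the numerator is split.
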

 	\begin{proof}
 		Recall from \eqref{eq:def_radial_variance} that $\sigma(r)^2 = n + \mathcal{O}(n^{9/10})$ uniformly in $r$. Both sums are bounded by
 		\begin{align*}
 			\frac{1}{\sigma(r)^2} \left|\sum_{k=0}^{n-1}r^{2k} e^{ik\eta} \right| &= \frac{1}{\sigma(r)^2} \left|\frac{1-r^{2n}e^{in\eta}}{1-r^2e^{i\eta}}\right| \\ & \lesssim \frac{1}{n} \frac{\left|1-e^{in\eta}\right| + \left|1-r^{2n}\right|}{|1-e^{i\eta}| - |1-r^2|} \lesssim \frac{1}{n} \frac{2 + n^{-1/10}}{|\eta| - n^{-11/10} } \lesssim  \frac{1}{\sqrt{n}}.
 		\end{align*}
 	\end{proof}
 	\begin{proof}[Proof of Lemma \ref{lemma:central_limit_theorem}]
 		Again, we prove only item 2, and leave (the simpler) item 1 to the reader. By \cite[Corollary~17.2]{bha_rao}, we have that 
 		\begin{align*}
 			\sup_{x,y\ge 0}\left|F_{n}^{\theta,\phi}(x,y) - G_n^{\theta,\phi}(x,y)\right| \lesssim \frac{1}{\sqrt{n}},
 		\end{align*}
 		where $G_n^{\theta,\phi}$ the CDF of a mean-zero Gaussian vector in $\bR^4$ with covariance structure $V = V(\theta,\phi)$ given by \eqref{eq:definition_of_covariance_4_d}. And so, it remains to show that
 		\begin{equation}
 			\label{eq:comparison_inequality_gaussians}
 			\sup_{x,y\ge 0}\left|F(x)F(y) - G_n^{\theta,\phi}(x,y)\right| \lesssim \frac{1}{\sqrt{n}}.
 		\end{equation}
 		For $\xi \in \bR^4$ and $\Sigma$ a $4\times4$ positive-definite matrix we set 
 		\begin{equation*}
 			f_{\Sigma}(\xi):= \frac{1}{4\pi^{2} \sqrt{\det(\Sigma)}} \exp\left(-\frac{1}{2} \xi^{T} \Sigma^{-1}\xi\right),\quad \text{and  } f(\xi):= f_{\frac{1}{2}\text{Id}}(\xi).
 		\end{equation*}
 		That is, $f_{\Sigma}$ is the density function (with respect to the Lebesgue measure $\dd m_4$ in $\bR^4$) of a mean-zero Gaussian vector in $\bR^4$ with covariance matrix $\Sigma$. By writing $\xi = (z,w)$ with $z,w\in \bR^2$ we use Claim \ref{claim:cancellation_in_trigonometric_sums} and get that
 		\begin{align*}
 			\sup_{x,y\ge 0}&\left|F(x)F(y) - G_n^{\theta,\phi}(x,y)\right| \\ & \leq \int_{\bR^4} \left|f(\xi) - f_{V}(\xi) \right| \dd m_4(\xi)  \\ &\leq \left(\int_{ \substack{|w| > n^{1/4} \\ z\in \bR^2}} + \int_{\substack{|z| > n^{1/4} \\ w\in \bR^2} } + \int_{ \substack{|z| \leq n^{1/4} \\ w \leq n^{1/4}}}\right)\left|f(\xi) - f_{V}(\xi) \right| \dd m_4(\xi) \\ & \lesssim e^{-\sqrt{n}} + \int_{ \substack{|z| \leq n^{1/4} \\ w \leq n^{1/4}}} e^{-|z|^2-|w|^2} \left|1 - \exp\left(-\frac{1}{2}(z,w)^{T} \left(V^{-1} - \text{Id}\right)(z,w)\right)\right| \dd m(z) \dd m(w) \\ &\lesssim e^{-\sqrt{n}} + \frac{1}{\sqrt{n}}\int_{\bR^4} |\xi|^2 e^{-|\xi|^{2}} \dd m_4(\xi) \lesssim \frac{1}{\sqrt{n}}
 		\end{align*}
 		which proves \eqref{eq:comparison_inequality_gaussians} and hence the lemma.
 	\end{proof}
 
	\vspace{1cm}
	\noindent

	\vspace{1cm}
	\noindent

\end{document}